\def\m{\mathfrak{m}}
\def\R{\mathcal{R}}
\def\NN{\mathbb{N}}
\def\m{\mathfrak{m}}
\def\R{\mathcal{R}}
\def\F{\mathcal{F}}
\def\H{\mathcal{H}}
\def\S{\mathcal{S}}
\def\NN{\mathbb{N}}
\def\red{\operatorname{red}}
\def\dim{\operatorname{dim}}
\def\red{\operatorname{red}}
\newtheorem{theorem}{Theorem}[section]
\newtheorem{definition}[theorem]{Definition}
\newtheorem{proposition}[theorem]{Proposition}
\newtheorem{example}[theorem]{Example}
\newtheorem{question}[theorem]{Question}
\newtheorem{remark}[theorem]{Remark}
\newtheorem{corollary}[theorem]{Corollary}
\begin{document}
\title{On the Vasconcelos inequality for the fiber multiplicity of modules}

\author[Balakrishnan R.]{Balakrishnan R.$^*$}
\email{r.balkrishnan@gmail.com}

\author{A. V. Jayanthan}
\email{jayanav@iitm.ac.in}
\address{Department of Mathematics, Indian Institute of Technology
Madras, Chennai, INDIA -- 600036.}
\thanks{$*$ Supported by the Council of Scientific and Industrial
Research (CSIR), India}
\thanks{AMS Classification 2010: 13D40, 13A30}
\keywords{Buchsbaum-Rim function, Buchsbaum-Rim polynomial,
Rees algebra of modules, Fiber cone of modules,
Vasconcelos Inequality}
\maketitle

\begin{abstract}
Let $(R,\m)$ be a Noetherian local ring of dimension $d>0$ with
infinite residue field. Let $M$ be a finitely generated proper
$R$-submodule of a free $R$-module $F$ with $\ell (F/M) < \infty$ and
having rank $r$. In this article, we study the fiber multiplicity
$f_0(M)$ of the module $M$. We prove that if $(R,\m)$ is a two
dimensional Cohen-Macaulay local ring, then $f_0(M)\le
br_1(M)-br_0(M)+\ell (F/M)+\mu(M)-r$, where $br_i(M)$ denotes the
$i^{th}$ Buchsbaum-Rim coefficient of $M$.
\end{abstract}

\section{Introduction}
Throughout the paper, we will assume that $(R,\m)$ is a Noetherian
local ring of dimension $d>0$ with infinite residue field and
$M$ is a finitely generated proper submodule of a free $R$-module $F$
with $\ell(F/M) < \infty$ and having rank $r$.  Let $\mathcal
S(F)=\underset {n\ge
0}{\bigoplus}{\mathcal S_n(F)}$ denote the Symmetric algebra of $F$,
and $\mathcal R(M)=\underset {n\ge 0}{\bigoplus}\mathcal R_n(M)$
denotes the Rees algebra of $M$, which is image of the natural map from
the Symmetric algebra of $M$ to the Symmetric algebra of $F$.
Generalizing the notion of Hilbert-Samuel function, D. A. Buchsbaum
and D. S. Rim studied the function $BF_M(n)=\ell (\mathcal
S_n(F)/\mathcal R_n(M))$ for $n\in \mathbb{N}$. In \cite {BR}, they
proved that $BF_M(n)$ is given by a polynomial of degree $d+r-1$ for
$n\gg0$, i.e., there exists a polynomial $BP_M(x)\in \mathbb Q[x]$ of
degree $d+r-1$ such that $BF_M(n)=BP_M(n)$ for $n\gg 0.$ The function
$BF_M(n)$ is called the Buchsbaum-Rim function of $M$ with respect to
$F$ and the polynomial $BP_M(n)$ is called the corresponding
Buchsbaum-Rim polynomial. Following the notation used for the
Hilbert-Samuel polynomial, one writes the Buchsbaum-Rim polynomial as
$$BP_M(n)=\sum_{i=0}^{d+r-1}(-1)^i br_i(M){n+d+r-i-2\choose
d+r-i-1}.$$ The coefficients $br_i(M)$ for $i=0,\ldots ,d+r-1$ are
known as Buchsbaum-Rim coefficients. For basic properties of the
Buchsbaum-Rim function and the Buchsbaum-Rim polynomial, we refer the
reader to \cite{hs},\cite{WV}. In this article we study the fiber
multiplicity $f_0(M)$ of the module $M$ and relate it with $br_0(M)$
and $br_1(M)$.

Let $\F(M) := \R(M) \otimes R/\m$ denote the fiber cone of $M$.  In
Section 2, we study Cohen-Macaulayness of fiber cone $\F(M)$. The
Cohen-Macaulayness of $\F(I)$, where $I$ is an ideal in $R$, has been
of interest and has been studied widely, see for example \cite
{clare},\cite {Goto},\cite {jayan2},\cite {shah}. In \cite {shah}, K.
Shah studied the Hilbert function and the Cohen-Macaulayness of
$\F(I)$.
\begin{theorem}\cite [Theorem 1]{shah}\label{cmfibercone}
Let $(R,\m)$ be a local ring. Suppose $I$ is an ideal which is
integral over a regular sequence $\underline{x}$ such that
$I^2=I\underline{x}$. Then $\F(I)$ is Cohen-Macaulay.
\end{theorem}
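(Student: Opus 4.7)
The plan is to prove Cohen-Macaulayness by exhibiting the images $X_1,\ldots,X_s$ of $x_1,\ldots,x_s$ in $\F(I)_1 = I/\m I$ as a regular sequence of length equal to $\dim \F(I)$. First I would verify $\dim \F(I) = s$: since $\underline{x}$ is an $s$-element reduction of $I$, the analytic spread satisfies $\ell(I) \le s$, while $\underline{x}$ being a regular sequence contained in $I$ gives $\operatorname{ht}(I) \ge s$ and hence $\ell(I) \ge s$. Thus $\dim \F(I) = \ell(I) = s$.

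Next I would exploit the hypothesis $I^2 = I\underline{x}$. A routine induction yields $I^{n+1} = \underline{x}\, I^n$ for every $n \ge 1$, so $\F(I)_{n+1} = \sum_i X_i \, \F(I)_n$ whenever $n \ge 1$. Consequently, the ring map $A := k[T_1,\ldots,T_s] \to \F(I)$ sending $T_i \mapsto X_i$ presents $\F(I)$ as a finitely generated graded $A$-module, generated in degrees $0$ and $1$. Since $A$ is a regular ring of Krull dimension $s = \dim \F(I)$, the Auslander-Buchsbaum formula reduces Cohen-Macaulayness of $\F(I)$ to the assertion that $\F(I)$ is a free $A$-module.

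The main obstacle is proving this freeness. I would extend $x_1,\ldots,x_s$ to a minimal set of generators $x_1,\ldots,x_s,f_{s+1},\ldots,f_\mu$ of $I$, so that $\F(I)$ is generated over $A$ by $1,\bar{f}_{s+1},\ldots,\bar{f}_\mu$. A hypothetical homogeneous $A$-linear relation $g_0(X)\cdot 1 + \sum_{i>s} g_i(X)\,\bar{f}_i = 0$ in $\F(I)$ lifts to a congruence $g_0(\underline{x}) + \sum_{i>s} g_i(\underline{x})\,f_i \in \m I^N$ in $R$ for an appropriate $N$. The delicate step is to combine two ingredients --- the Koszul-theoretic syzygies on $x_1,\ldots,x_s$ coming from regularity of the sequence, and the filtration identity $I^N = (\underline{x})^{N-1} I$ furnished by the hypothesis --- to force every coefficient $g_i$ to vanish. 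The subtlety, and the reason the argument works \emph{precisely} under Shah's hypothesis, is that the generators $f_{s+1},\ldots,f_\mu$ lie in $I$ but not in $(\underline{x})R$; only the identity $I^2 = I\underline{x}$ ensures that the $\m$-adic filtration on these extra generators is compatible with the $\underline{x}$-adic filtration tightly enough to rule out spurious relations. I expect this compatibility check to be the technical heart of the argument.
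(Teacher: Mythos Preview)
The paper does not supply its own proof of Theorem~\ref{cmfibercone}: it is quoted from \cite{shah} as motivation, without argument. The nearest proof in the paper is that of Theorem~\ref{cmfm2}, the analogue for modules over two-dimensional Cohen--Macaulay rings, so that is the natural point of comparison.

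Your strategy---present $\F(I)$ as a graded module over the polynomial ring $A = k[X_1,\ldots,X_s]$ generated by $1,\bar f_{s+1},\ldots,\bar f_\mu$, and show it is free---is essentially the route the paper takes for Theorem~\ref{cmfm2}, just packaged differently. There the authors show directly that the obvious set of monomials (your would-be $A$-basis, written out degree by degree) is a \emph{minimal} generating set of each $\R_n(M)$, read off the Hilbert series $(1+pt)/(1-t)^{r+1}$, and invoke the criterion of Theorem~\ref{cmfm}. Your freeness claim and their minimality claim are the same statement; Cohen--Macaulayness then follows either by Auslander--Buchsbaum (your route) or by the Hilbert-series characterization (theirs).

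Where your proposal stops short is precisely the ``technical heart'' you flag: you name the right ingredients (regularity of $\underline{x}$ and $I^N = (\underline{x})^{N-1}I$) but do not actually execute the relation-killing step. In the module case the paper discharges this via \cite[Corollary~4.5]{HH}, and it remarks immediately afterwards that for ideals the argument is much simpler because one has the associated graded ring and the Valabrega--Valla theorem available. So your outline is on the right track, but the paper explicitly points to Valabrega--Valla as the clean tool for the ideal case, rather than an ad hoc Koszul/filtration analysis.
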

We study some basic properties of the fiber cone $\F(M)$. We give a
characterization for the Cohen-Macaulayness of $\F(M)$. We then prove
an analogue of Theorem \ref{cmfibercone} in the case of modules over
two dimensional Cohen-Macaulay local rings. We first recall some
basics on reduction of modules.

Let  $N$ be a submodule of $M$. We say that $N$ is reduction
of $M$ if Rees algebra $\mathcal R(M)$ is integral over the
$R$-subalgebra $\mathcal{R}(N)$. Equivalently, there exists $n_0$ such
that $\mathcal R_{n+1}(M)=N\mathcal R_n(M)$ for $n\geq n_0$,
where the multiplication is done as $R$-submodules of $\R(M)$. The
least integer $s$ such that $\mathcal R_{s+1}(M)=N\mathcal R_s(M)$ is
called the reduction number of $M$ with respect to $N$, denoted as
$\red_N(M)$. The reduction number of the module $M$, denoted $\red
(M),$ is defined as $\red (M)=\min \{\red_N(M):~ N \mbox{ is a minimal
reduction of M}\}$. If $N$ is a submodule of $F$ generated by $d+r-1$
elements such that $\ell(F/N) < \infty$, then $N$ is said to be a
parameter module. It was proved in \cite{BUV} that if $\ell(F/M) <
\infty$, then there exists minimal reduction generated by $d+r-1$
elements.  For more details on minimal reductions, we refer the reader
to \cite{hs} and \cite{WV}. In this article, we prove:

\begin{theorem}
Let $(R,\m)$ be a $2$-dimensional Cohen-Macaulay local ring.
Let $M \subset F$ be such that $\ell (F/M) < \infty$ and having rank
$r$. If $\red (M)\leq 1$, then $\F(M)$ is Cohen-Macaulay.
\end{theorem}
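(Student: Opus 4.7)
The plan is to exhibit an explicit regular sequence of length $\dim \F(M)$ inside the fiber cone. Fix a minimal reduction $N \subseteq M$ with $\red_N(M)\le 1$; since $\dim R = 2$ and the residue field is infinite, $N$ is a parameter module, minimally generated by exactly $d+r-1 = r+1$ elements $f_1,\ldots, f_{r+1}$. The reduction hypothesis $\R_2(M) = NM$ propagates inductively to
\[
\R_n(M) = N^{n-1}M, \qquad \F_n(M) = \frac{N^{n-1}M}{\m\, N^{n-1}M},
\]
for all $n \ge 1$, giving a transparent description of every graded piece. The natural candidates for a regular sequence on $\F(M)$ are the images $f_1^{*},\ldots, f_{r+1}^{*}$ of these generators in $\F_1(M) = M/\m M$.

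The first step is to verify that $f_1^{*},\ldots, f_{r+1}^{*}$ is a homogeneous system of parameters of $\F(M)$. Because $N$ is a reduction of $M$, the fiber cone $\F(M)$ is module-finite over the graded subalgebra $(R/\m)[f_1^{*},\ldots, f_{r+1}^{*}]$; combined with the fact that $\dim \F(M) = r+1$ (the analytic spread of $M$), this forces these $r+1$ images to form a homogeneous system of parameters. Consequently, Cohen-Macaulayness of $\F(M)$ reduces to checking that $f_1^{*},\ldots, f_{r+1}^{*}$ is a regular sequence.

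The crux, and the principal obstacle, is this regular-sequence property. The intended route is to invoke the characterization of Cohen-Macaulayness of $\F(M)$ established earlier in Section 2, which is expected to be a Valabrega-Valla-style criterion translating the regularity of $f_1^{*},\ldots, f_{r+1}^{*}$ into transversality identities between the submodules spanned by partial subsequences $\{f_1,\ldots, f_i\}$ and the filtration $\{\m\, N^{n-1}M\}_n$. The Cohen-Macaulayness of $R$ ensures that $f_1,\ldots, f_{r+1}$ behave like a regular sequence with respect to the free module $F$ (which is the content of $N$ being a parameter module), and the degree-one identity $\R_2(M) = NM$ collapses the infinite family of transversality conditions at every graded level to a verification at low levels. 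The delicate point is controlling the interaction between $\m$ and the powers $N^{n-1}$, which is precisely where both the hypothesis $d = 2$ and the reduction-number bound $\red(M)\le 1$ enter in an essential way; in higher dimension or for larger reduction numbers the corresponding intersections would no longer be forced to split, and no such clean identification of $\F_n(M)$ would be available.
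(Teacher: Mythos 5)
There is a genuine gap at the step you yourself identify as the crux. Your reduction of the problem to showing that the images $f_1^{*},\ldots,f_{r+1}^{*}$ form a regular sequence on $\F(M)$ is fine (they are a homogeneous system of parameters for the reasons you give), but you then defer the regular-sequence verification to ``a Valabrega--Valla-style criterion'' that you expect the Section~2 characterization to provide. It does not: the characterization proved in Section~2 (Theorem~\ref{cmfm}) is a purely \emph{numerical} criterion --- Cohen--Macaulayness of $\F(M)$ is equivalent to the Hilbert series having the form $\frac{1}{(1-t)^a}\sum_i \ell\bigl(\R_i(M)/(N\R_{i-1}(M)+\m\R_i(M))\bigr)t^i$, equivalently to $f_0(M)$ equaling that sum --- and the paper explicitly remarks that for modules both the associated graded ring and any Valabrega--Valla type theorem are \emph{missing}. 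So the tool your argument leans on at the decisive moment is precisely the tool that is unavailable in this setting, and nothing in your proposal substitutes for it.

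The paper's proof takes a different and more concrete route: rather than attempting to verify regularity of a sequence, it computes $\mu(\R_n(M))$ exactly. Using $\R_n(M)=\langle \R_n(N),\R_{n-1}(N)\R_1(K)\rangle$ (your identity $\R_n(M)=N^{n-1}M$), one writes down the obvious generating set consisting of degree-$n$ monomials in the generators of $N$ together with degree-$(n-1)$ monomials times the extra generators $y_j$ of $M$, and proves this set is a \emph{minimal} generating set. The minimality is the delicate point, and it is secured by the analytical independence of the generators of a parameter module, \cite[Corollary 4.5]{HH}, a result currently known only in dimension $2$ --- this is where the hypothesis $d=2$ genuinely enters, not in any splitting of intersections with powers of $\m$. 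From the resulting count $\mu(\R_n(M))=\binom{n+r}{r}+p\binom{n+r-1}{r}$ one reads off $\H(\F(M),t)=(1+pt)/(1-t)^{r+1}$ and concludes by the numerical criterion. To repair your argument you would either need to supply a module analogue of Valabrega--Valla (an open issue) or replace the regular-sequence strategy with an explicit Hilbert-series computation of this kind.
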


In \cite {vas2}, A. Corso, C. Polini and W. Vasconcelos studied the
multiplicity of the fiber cone $\F(I)$. One of the main result
obtained in \cite {vas2} is an inequality relating fiber multiplicity
$f_0(I)$, Hilbert coefficients $e_0(I)$ and $e_1(I)$ and some other
invariants of $I$:
\begin{theorem}\cite [Theorem 2.1]{vas2} \label{cpv-ideal}
Let $(R,\m)$ be a Cohen-Macaulay local ring of dimension $d>0$ with
infinite residue field. Let $I$ be an $\m$-primary ideal. Then we have
that $$f_0(I)\le  e_1(I)-e_0(I) + \ell(R/I) + \mu(I) - d + 1,$$
where $\mu(M)$ denotes the cardinality of a minimal generating set of
an $R$-module $M$.
\end{theorem}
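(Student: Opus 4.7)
The plan is to exploit the module structure of the fiber cone $\F(I)$ over a polynomial subring coming from a minimal reduction. Since the residue field is infinite and $I$ is $\m$-primary, one can choose a minimal reduction $J=(a_1,\ldots,a_d)$ of $I$ whose generators form part of a minimal generating set of $I$; in particular $\mu(J)=d$ and $\mu(I/J)=\mu(I)-d$. The images of $a_1,\ldots,a_d$ in $I/\m I$ are analytically independent, so they generate a polynomial subring $A\cong (R/\m)[T_1,\ldots,T_d]$ of $\F(I)$ over which $\F(I)$ becomes a finitely generated graded module; note that $A$ itself has multiplicity one.

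The first step is the classical observation that for a finitely generated graded module $N$ over such a polynomial ring $A$ one has $e_0(N)\le \mu_A(N)$: lifting a minimal generating set to a graded surjection from a free module and comparing leading coefficients of Hilbert polynomials forces this. In our situation the number of degree-$n$ minimal $A$-generators of $\F(I)$ equals $\mu_R(I^n/JI^{n-1})$ for $n\ge 1$ and equals $1$ in degree zero. Therefore
$$f_0(I)\;=\;e_0(\F(I))\;\le\; 1+\mu(I/J)+\sum_{n\ge 2}\mu(I^n/JI^{n-1})\;=\;\mu(I)-d+1+\sum_{n\ge 2}\mu(I^n/JI^{n-1}).$$

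The tail is then handled by $\mu\le \ell$ and telescoping:
$$\sum_{n\ge 2}\mu(I^n/JI^{n-1})\;\le\;\sum_{n\ge 1}\ell(I^n/JI^{n-1})\;-\;\ell(I/J).$$
The Cohen--Macaulay hypothesis enters through two classical inputs: the Huckaba--Marley inequality $\sum_{n\ge 1}\ell(I^n/JI^{n-1})\le e_1(I)$, which holds in any Cohen--Macaulay local ring for any minimal reduction $J$ of $I$; and the identity $\ell(R/J)=e_0(J)=e_0(I)$, so that $\ell(I/J)=e_0(I)-\ell(R/I)$. Assembling these produces
$$f_0(I)\;\le\;\mu(I)-d+1+e_1(I)-e_0(I)+\ell(R/I),$$
as claimed.

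The substantive obstacle is the Huckaba--Marley estimate itself, which rests on a careful depth analysis of $\operatorname{gr}_I(R)$ via the Valabrega--Valla criterion; by contrast, the selection of a minimal reduction whose generators lie in a minimal generating set of $I$ (using infiniteness of $R/\m$) and the identification of $\mu(I^n/JI^{n-1})$ with the number of degree-$n$ generators of $\F(I)$ over $A$ are essentially bookkeeping.
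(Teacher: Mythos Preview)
Your argument breaks at the ``Huckaba--Marley'' step: the inequality you invoke is reversed. What Huckaba and Marley actually prove for a Cohen--Macaulay local ring is
\[
e_1(I)\;\le\;\sum_{n\ge 1}\ell\!\left(I^n/JI^{n-1}\right),
\]
with equality if and only if $\operatorname{depth}\operatorname{gr}_I(R)\ge d-1$. Thus whenever $\operatorname{depth}\operatorname{gr}_I(R)<d-1$ your tail $\sum_{n\ge 2}\ell(I^n/JI^{n-1})$ strictly exceeds $e_1(I)-\ell(I/J)=e_1(I)-e_0(I)+\ell(R/I)$, and the chain of inequalities collapses. The passage from $\mu$ to $\ell$ is harmless; it is the bound on the sum of lengths that is simply false in general.

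The approach of \cite{vas2}, which the present paper adapts verbatim to modules in Theorem~\ref{cpv-general}, avoids summing lengths altogether. One takes the graded exact sequence
\[
\mathcal R(J)\oplus \mathcal R(J)^{k}[-1]\;\longrightarrow\;\mathcal R(I)\;\longrightarrow\;S_J(I)[-1]\;\longrightarrow\;0,
\]
where $k=\mu(I)-d$ and $S_J(I)=\bigoplus_{n\ge 1} I^{n+1}/IJ^{n}$ is the Sally module, tensors with $R/\mathfrak m$, and compares the \emph{leading coefficients} of the resulting Hilbert polynomials. The point is that the Hilbert polynomial of $S_J(I)$ has degree at most $d-1$ with leading coefficient $e_1(I)-e_0(I)+\ell(R/I)$; this is a multiplicity computation, not a summation of lengths, and it yields $f_0(I)\le 1+k+\bigl(e_1(I)-e_0(I)+\ell(R/I)\bigr)$ directly.

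Your opening move---bounding $f_0(I)$ by the number of $A$-generators of $\mathcal F(I)$---is fine and is in spirit the same as the surjection above restricted to degrees $0$ and $1$. The divergence is in how the degree~$\ge 2$ contribution is controlled: you try to sum the lengths of the graded pieces $I^n/JI^{n-1}$, which is the wrong invariant in dimension $>1$; the paper instead packages these pieces into the Sally module and reads off its multiplicity from the Hilbert polynomial.
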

Motivated by this inequality, Vasconcelos raised the question:
\begin{question}\label{vasco-qn}
Let $(R,\m)$ be a $d$-dimensional Cohen-Macaulay local ring and
$M\subset F$  with $\ell (F/M)<\infty$ and having rank $r$. Then, does
the fiber multiplicity $f_0(M)$ satisfy
$$f_0(M)\le br_1(M)-br_0(M)+\ell(F/M)+\mu (M)-d-r+2?$$
\end{question}
In Sections 3  and 4, we address the above question. In Section
3, we prove that Question \ref{vasco-qn} has an affirmative answer
when $\dim R = 2$. In Section 4, we establish the inequality for 
modules of the form $M=I\oplus \cdots \oplus I \oplus J \oplus \cdots
\oplus J$, where $I$ is an $\m$-primary ideal in $R$ and $J$ is a 
minimal reduction of $I$.  For $d=1$, we provide a counter example.
\vskip 2mm
\noindent
\textbf{Acknowledgements:} We sincerely thank the referees for pointing out several errors, some of them typographical and some of them mathematical, which tremendously improved the exposition.
\section{Fiber cone of modules}
In this section, we study Cohen-Macaulay property of fiber cone of
modules. We begin by recalling the definition of fiber cone $\F(M)$.
\begin{definition}
Let $M\subset F$ be such that $\ell(F/M) < \infty$ and having rank
$r$. The fiber cone of $M,$ denoted by $\F(M),$ is defined as
$$\F(M):=\R(M) \otimes R/\m= \bigoplus \frac{\R_i(M)}{\m \R_i(M)},$$
where $\R(M)$ is the Rees algebra of $M$.
\end{definition}
The Krull dimension of $\F(M)$ is known as the analytic spread of $M$
and is equal to $d+r-1$, \cite{BUV}. The Hilbert function of
$\F(M)$  is given by
$$H(\F(M),n)=\ell (\R_n(M)/\m \R_n(M)), \mbox{ for } n\in \NN .$$
The corresponding Hilbert polynomial, of degree $d+r-2$ for $n\gg 0$,
is written as
$$H(\F(M),n)=f_0(M){n+d+r-2\choose d+r-2}-f_1(M){n+d+r-3\choose
d+r-3}+\cdots +(-1)^{d+r-2}f_{d+r-2}(M).$$
The leading coefficient $f_0(M)$ is called the fiber multiplicity of
$M$. Let $\H(\F(M),t)$ denote the Hilbert series of $\F(M),$ i.e.,
$$\H(\F(M),t)=\sum_{n=0}^{\infty}H(\F(M),n)t^n.$$\\
We now give a characterization for the Cohen-Macaulayness of the fiber
cone of a module in terms of its Hilbert series and its fiber
multiplicity. See also \cite [Proposition 8.40]{WV}. We skip the proof
of Theorem \ref{cmfm} as it is routine.

\begin{theorem}\label{cmfm}
Let $(R,\m)$ be a Noetherian local ring, $M\subset F$ be such that
$\ell(F/M)< \infty$, having rank $r$ and $N\subseteq M$ be a minimal
reduction. Then the following are equivalent:
  \begin{enumerate}
    \item Fiber cone $\F(M)$ is Cohen-Macaulay;
    \item $\H(\F(M),t)=\frac{1}{(1-t)^a} \sum_{i=0}^{b}\ell\left(\frac{\R_i(M)}{N\R_{i-1}(M)+\m \R_i(M)}\right)t^{i}$,\\  ~~~~~ where $b=\red _N(M)$, $a= \dim \F(M)$;
    \item $f_0(M)=\sum_{i=0}^{b} \ell\left(\frac{\R_i(M)}{N\R_{i-1}(M)+\m \R_i(M)}\right).$
  \end{enumerate}
\end{theorem}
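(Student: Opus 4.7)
The plan is to realize $\F(M)$ as a standard graded algebra over the infinite field $R/\m$ of Krull dimension $a=d+r-1$, and reduce the Cohen--Macaulayness question to whether the images of generators of the minimal reduction $N$ form a regular sequence in $\F(M)$.

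First, I would choose generators $n_1,\dots,n_{d+r-1}$ of $N$ (such generating sets exist since $N$ is a minimal reduction and the analytic spread of $M$ equals $d+r-1$ by Buchsbaum--Ulrich--Vasconcelos), and write $n_1^\ast,\dots,n_{d+r-1}^\ast$ for their images in $\F(M)_1$. These form a homogeneous system of parameters of $\F(M)$: since $N$ is a reduction, $\R(N)\hookrightarrow \R(M)$ is integral, hence so is the induced map of fiber cones, so the quotient $\F(M)/(n_1^\ast,\dots,n_{d+r-1}^\ast)\F(M)$ is artinian. Computing degree-wise, its $i$-th graded piece is $\R_i(M)/(N\R_{i-1}(M)+\m\R_i(M))$, which vanishes for $i>b$ since $\red_N(M)=b$ forces $\R_{i+1}(M)=N\R_i(M)$ for all $i\ge b$. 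Hence its Hilbert series is the polynomial
$$h(t)=\sum_{i=0}^{b}\ell\!\left(\frac{\R_i(M)}{N\R_{i-1}(M)+\m\R_i(M)}\right)t^i.$$

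With this setup, the equivalences come out by standard manipulations. For $(1)\Rightarrow(2)$: Cohen--Macaulayness of $\F(M)$ is equivalent to $n_1^\ast,\dots,n_{d+r-1}^\ast$ being a regular sequence (they are a homogeneous system of parameters of the right length), and killing a nonzerodivisor of degree $1$ multiplies the Hilbert series by $(1-t)$; iterating $a$ times yields $\H(\F(M),t)=h(t)/(1-t)^a$. For $(2)\Rightarrow(3)$, reading off the leading coefficient of the Hilbert polynomial from the series $h(t)/(1-t)^a$ amounts to evaluating $h(1)$, which is precisely the sum in (3). For $(3)\Rightarrow(1)$, I would invoke the graded analogue of Serre's inequality: for a standard graded algebra $A$ of dimension $a$ over a field, and a degree-$1$ homogeneous system of parameters $\mathbf{x}$, one has $e(A)\le \ell(A/\mathbf{x}A)$ with equality iff $\mathbf{x}$ is a regular sequence. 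Applied to $A=\F(M)$ and $\mathbf{x}=(n_1^\ast,\dots,n_{d+r-1}^\ast)$, the hypothesis of (3) forces equality, hence Cohen--Macaulayness.

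The only substantive input is the first step---identifying the images of a minimal generating set of $N$ as a homogeneous system of parameters in $\F(M)$ whose quotient is supported in degrees $0,\dots,b$---which uses critically that $N$ is generated by exactly $\dim\F(M)=d+r-1$ elements and that $\red_N(M)=b$. Everything thereafter is standard Hilbert-series bookkeeping and the Serre-style multiplicity bound, which is presumably why the authors deem the proof routine.
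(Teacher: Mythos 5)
Your proof is correct and is exactly the routine argument the paper chooses to omit (cf.\ \cite[Proposition 8.40]{WV}): identify the images of the $d+r-1$ minimal generators of $N$ as a linear system of parameters of $\F(M)$ whose quotient is $\bigoplus_{i=0}^{b}\R_i(M)/(N\R_{i-1}(M)+\m\R_i(M))$, then run the standard Hilbert-series and multiplicity comparisons. The only step worth making explicit is that the ``equality in the Serre-type bound forces a regular sequence'' direction, which requires unmixedness-type hypotheses for general local rings, is legitimate here because $\F(M)$ is standard graded over a field: a minimal generating set over the Noether normalization $k[n_1^\ast,\dots,n_{d+r-1}^\ast]$ gives a surjection from a graded free module whose kernel is torsion-free and hence, if nonzero, would strictly lower the multiplicity.
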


We also note that Theorem \ref {cmfm} gives an upper bound on the reduction number of $M$ when $\F(M)$ is Cohen-Macaulay:
\begin{remark}\label{red_bound}
For an $R$-module $K$, let $\mu(K)$ denote the minimum number of
generators of $K$. Note that if $\F(M)$ is Cohen-Macaulay, then
$$f_0(M)=\sum_{i=0}^{b} \ell \left(\frac{\R_i(M)}{N\R_{i-1}(M)+\m
\R_i(M)}\right)=1+\sum_{i=1}^{b}\mu
\left(\frac{\R_i(M)}{N\R_{i-1}(M)}\right).$$ Since $\mu
\left(\frac{\R_i(M)}{N\R_{i-1}(M)}\right)\ge 1$ for all $1\le i\le
b=\red(M)$, we get $$\red(M)\le f_0(M) - 1.$$
Using the fact that $\ell(M/N+\m M) = \mu(M) - \mu(N) = \mu(M) -
(d+r-1)$, we obtain a better bound in the above case:
$$\red(M)\le f_0(M)-\mu (M)+d+r-1.$$
\end{remark}
It is natural to expect that smaller reduction number of the module
$M$ force good properties on $\F(M)$. If $\red (M)=0$, then $M$ is a
parameter module and hence $\F(M)$ is a polynomial ring over the
residue field of $R$. Next natural condition is to consider when $\red
(M)$ is one. We extend K. Shah's, \cite{shah}, result to the case of modules over
two dimensional Cohen-Macaulay rings:
\begin{theorem}\label{cmfm2}
Let $(R,\m)$ be a $2$-dimensional Cohen-Macaulay local ring. Let $M
\subset F$ be such that $\ell (F/M) < \infty$ and having rank $r$. If
$\red (M)\leq 1$, then $\F(M)$ is Cohen-Macaulay.
\end{theorem}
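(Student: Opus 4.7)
The plan is to apply Theorem \ref{cmfm}(3). Let $N\subseteq M$ be a minimal reduction with $\red_N(M)\le 1$; since $\dim R=2$ we have $\mu(N)=d+r-1=r+1$, and condition (3) becomes the numerical equality $f_0(M)=\ell(R/\m)+\ell(M/(N+\m M))=1+\mu(M)-\mu(N)=\mu(M)-r$. The second equality here uses the identity $N\cap \m M=\m N$, which holds because the minimal generators of a minimal reduction $N$ can be extended to a minimal generating set of $M$ (so their images in $M/\m M$ are linearly independent). The general multiplicity inequality applied to $\F(M)$ with the system of parameters given by the images of the generators of $N$, combined with $\red_N(M)\le 1$, automatically gives the upper bound $f_0(M)\le \mu(M)-r$, so it suffices to prove the reverse inequality $f_0(M)\ge \mu(M)-r$.

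For the lower bound, the plan is to compute the Hilbert function of $\F(M)$ by regarding it as a graded $\F(N)$-module. Since $\red_N(M)\le 1$, we have $\R_n(M)=N^{n-1}M$ for $n\ge 1$, so $\F(M)$ is generated over $\F(N)$ in degrees $\le 1$. Because $N$ is a parameter module of analytic spread $r+1=\mu(N)$ in the 2-dimensional Cohen-Macaulay setting, $\F(N)\cong k[T_1,\ldots,T_{r+1}]$ is a polynomial ring. Extending a minimal generating set $n_1,\ldots,n_{r+1}$ of $N$ to a minimal generating set $n_1,\ldots,n_{r+1},m_{r+2},\ldots,m_{\mu(M)}$ of $M$ yields a natural graded surjection of $\F(N)$-modules
$$\Psi\colon \F(N)\oplus \F(N)(-1)^{\mu(M)-r-1}\to \F(M).$$
If $\Psi$ is an isomorphism, then $\F(M)$ is a free $\F(N)$-module of rank $\mu(M)-r$, hence Cohen-Macaulay over the polynomial ring $\F(N)$, with Hilbert series $(1+(\mu(M)-r-1)t)/(1-t)^{r+1}$ and fiber multiplicity $f_0(M)=\mu(M)-r$; the proof then concludes by Theorem \ref{cmfm}.

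The hard step is verifying injectivity of $\Psi$, equivalently the lower bound $H(\F(M),n)\ge \binom{n+r}{r}+(\mu(M)-r-1)\binom{n+r-1}{r}$ for $n\ge 1$. The approach is to exploit the 2-dimensional Cohen-Macaulay hypothesis via the Buchsbaum-Rim-type projective resolution $0\to R\to R^{r+1}\to F\to F/N\to 0$ of the parameter module $N$, which has length exactly $d=2$ and therefore tightly controls the $R$-linear syzygies among $n_1,\ldots,n_{r+1}$ and hence the defining relations of $\R(N)$ and $\F(N)$. The base case $n=1$ is the intersection condition $N\cap \m M=\m N$ recalled above. For $n\ge 2$, one propagates injectivity inductively, carefully tracking how the Buchsbaum-Rim syzygies interact modulo $\m$ with products involving the extra generators $m_{r+2},\ldots,m_{\mu(M)}$; the 2-dimensional hypothesis is used essentially here to guarantee that only the controlled linear syzygies appear, so that no higher-order obstructions corrupt the dimension count. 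Once this is established, the Hilbert function of $\F(M)$ attains the expected value, $\Psi$ becomes an isomorphism, and the theorem follows from Theorem \ref{cmfm}.
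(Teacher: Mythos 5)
Your overall skeleton is the same as the paper's: extend a minimal generating set of $N$ to one of $M$, observe that $\red_N(M)\le 1$ makes $\F(M)$ generated over $\F(N)\cong k[T_1,\dots,T_{r+1}]$ in degrees $0$ and $1$, and reduce everything to showing that the surjection $\Psi$ is injective, i.e.\ to the lower bound $H(\F(M),n)\ge\binom{n+r}{r}+(\mu(M)-r-1)\binom{n+r-1}{r}$. That reduction, and the degree-one case $N\cap\m M=\m N$, are fine. The gap is that you have correctly isolated the hard step and then not proved it: ``propagate injectivity inductively, carefully tracking how the Buchsbaum--Rim syzygies interact modulo $\m$'' is a description of what must be shown, not an argument. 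The Buchsbaum--Rim resolution $0\to R\to R^{r+1}\to F\to F/N\to 0$ controls the $R$-linear syzygies of $N$ inside $F$; injectivity of $\Psi$ in degree $n\ge 2$ is instead a statement about degree-$n$ relations of the Rees algebra $\R(M)\subset \mathcal{S}(F)$ modulo $\m$, and passing from the former to the latter in all degrees is precisely the nontrivial content of the theorem.

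Concretely, the paper needs two inputs at this point that your sketch neither supplies nor cites. First, the analytic independence of the generators of a parameter module over a two-dimensional Cohen--Macaulay ring, \cite[Corollary 4.5]{HH}: if $\sum_i c_i\delta_i\in\R_n(N)$, where the $\delta_i$ run over the degree-$(n-1)$ monomials in the generators of $N$ and $c_i\in M$, then each $c_i\in N$. Applied with $c_{i}=\sum_j s_{ij}m_j$ a combination of the extra generators, this is what forces all coefficients on the $\F(N)(-1)^{\mu(M)-r-1}$ summand into $\m$. Second, the identity $\R_n(N)\cap\m\R_n(M)=\m\R_n(N)$, which then disposes of the coefficients on the $\F(N)$ summand. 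The first of these is exactly the reason the theorem is stated only for $d=2$ --- the paper remarks that a higher-dimensional analogue of \cite[Corollary 4.5]{HH} is not known, and that the proof would otherwise go through in any dimension --- so an argument that neither invokes nor reproves it is missing the essential ingredient; the induction you describe has no engine without it.
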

\begin{proof}
Let $N\subseteq M$ be a minimal reduction such that $\red (M)=
\red_N(M)=1$. Now $\mu (N)=r+1$. By Theorem \ref {cmfm}, it is enough
to show that
  $$\H(\F(M),t)=\frac{1}{(1-t)^{r+1}}\left[1+\ell\left(\frac{M}{N+\m M}\right)t\right].$$
Let $\{x_1,\ldots, x_{r+1}\}$ be a minimal generating set for $N$.
Extend this to a minimal generating set $\{x_1,\ldots,
x_{r+1},y_1,\ldots,y_p\}$ for $M$ such that $\mu (M)=r+p+1$. Set
$K=(y_1,\ldots,y_p)\subseteq M$. So $M=N+K$.
Identifying the elements $x_i$'s and $y_j$'s with their images in
$\R_1(M)$, it can be seen that
$\R_n(M)$ is generated as an $R$-module by $\{\R_n(N),\R_{n-1}(N)\R_1(K),\ldots,\R_1(N)\R_{n-1}(K),\R_n(K)\}$.

Since $\red _N(M)=1, \R_1(N)\R_1(M)=\R_2(M)$. This implies that $\R_i(N)\R_1(M)=\R_{i+1}(M)$ for all $i\ge 1$. Now for $i\ge 2$,
\begin{eqnarray*}
  \R_{n-i}(N)\R_i(K)&\subseteq &\R_{n-i}(N)\R_i(M)\subseteq \R_n(M)\\
  &=&\R_{n-1}(N)\R_1(M)\\
  &=&\R_n(N)+\R_{n-1}(N)\R_1(K).
\end{eqnarray*}
Therefore $$\R_n(M)=\langle{\R_n(N),\R_{n-1}(N)\R_1(K)\rangle}.$$ Set
$T_n=\left \{x_1^{\alpha _1}\cdots x_{r+1}^{\alpha _{r+1}}\mid \alpha
_1 +\cdots+\alpha _{r+1}=n\right \}$, where products are taken in
$\R(N)$. Let $\Delta _1,\ldots ,\Delta _{k_{(n)}}$ and $\delta
_1,\ldots,\delta _{k_{(n-1)}}$ denote elements of $T_n$ and $T_{n-1}$
respectively. \\Set $S_n=\{\delta _{i}y_j\mid i=1,\ldots ,
k_{(n-1)},j=1,\ldots, p\}$\\
\textbf{Claim:} $T_n \cup S_n$ is a minimal generating set for $\R_n(M)$.
\\It is clear that $T_n \cup S_n$ generates $\R_n(M)$. We only need to prove the minimality. So let us assume that
$$\sum _{i=1}^{k_{(n)}} r_i \Delta _i + \sum _{i=1}^{k_{(n-1)}}
\sum_{j=1}^p s_{ij}\delta _{i}y_{j}=0.$$
Suppose $s_{i_{_0}j_{_0}}\notin \m$ for some $i_0,j_0$.
Rewriting the above relation, we get
\begin{eqnarray*}
  \sum _{i=1}^{k_{(n)}} r_i \Delta _i + \left(\sum _{j=1}^{p}
  s_{i_{_0}j}y_j\right)\delta _{i_{_0}}+ \sum _{i=1,i\neq
	i_{_0}}^{k_{(n-1)}}\sum_{j=1}^p  s_{ij}\delta _{i}y_{j}&=&0,\\
i.e.,\left(\sum _{j=1}^{p} s_{i_{_0}j}y_j\right)\delta _{i_{_0}}+ \sum
_{i=1,i\neq i_{_0}}^{k_{(n-1)}}\sum_{j=1}^p  s_{ij}\delta _{i}y_{j}&=& -\sum _{i=1}^{k_{(n)}} r_i \Delta _i \in \R_n(N).
\end{eqnarray*}
By  \cite [Corollary 4.5]{HH}, we get $\sum _{j=1}^{p} s_{i_{_0}j}y_j \in N$. Since $s_{i_{_0}j_{_0}}$ is a unit, this implies that $y_{j_{_0}}\in (N,y_1,\ldots ,\hat{y_{j_{_0}}},\ldots,y_p)$ contradicting the minimality of the generating set considered for $M$ above. Therefore $s_{i_{_0}j_{_0}}\in \m$. Therefore,  
\[ \sum _{i=1}^{k_{(n)}} r_i \Delta _i = - \sum _{i=1}^{k_{(n-1)}}\sum_{j=1}^p s_{ij}\delta _{i}y_{j} \in R_n(N) \cap \m R_n(M) = \m R_n(N).\] Therefore $r_i \in \m$ for all $i$ which completes the proof of the claim.\\
Note that $k_{(n)}={n+r\choose r}$ and $\mid T_n \cup S_n \mid = {n+r\choose r}+p{n-1+r\choose r}$. Therefore we have
\begin{eqnarray*}
  \H(\F(M),t)&=&\sum _{n=0}^{\infty} \mu (\R_n(M))t^n\\
  &=&\sum _{n=0}^{\infty}\left[{n+r\choose r}+p{n-1+r\choose r}\right]t^n\\
  &=&\sum _{n=0}^{\infty}{n+r\choose r}t^n + p \sum _{n=0}^{\infty}\left[{n+r\choose r}-{n+r-1\choose r-1}\right]t^n\\
  &=&\frac{1}{(1-t)^{r+1}}+p\left[\frac{1}{(1-t)^{r+1}}-\frac{1}{(1-t)^{r}}\right]\\
  &=&\frac{1+pt}{(1-t)^{r+1}}.
\end{eqnarray*}
Observe that
\begin{eqnarray*}
  \ell \left(\frac{M}{N+\m M}\right)&=&\ell \left(\frac{M}{\m M}\right)-\ell \left(\frac{N+\m M}{\m M}\right)\\
  &=&\mu (M)-\ell \left(\frac{N}{\m M\cap N}\right)\\
  &=&\mu (M)-\ell \left(\frac{N}{\m N}\right)\\
  &=&\mu (M)-\mu (N)\\
  &=&p.
\end{eqnarray*}
Therefore by Theorem \ref{cmfm}, $\F(M)$ is Cohen-Macaulay.
\end{proof}
One of the key ideas used in the above proof is the analytical
independence of the generators of $N$, \cite[Corollary 4.5]{HH}. This
result is proved in dimension $2$ and as far as we know, an analogue
of this result in higher dimensions is not known. Once this result is
generalized to higher dimensions, the above proof goes through for
higher dimensions as well.

At this stage, we would also like to compare the case of ideals with
that of modules. In the case of ideals, the above result, in much more
generality, has a much simpler proof due to the existence of the
associated graded ring and the beautiful and one of the most basic
results on regular sequences, namely Valabrega--Valla theorem. In the
case of modules, both associated graded ring as well as a
Valabrega--Valla type theorem are missing.

Following is an interesting observation on the Cohen-Macaulayness of
$F(M)$ in a special case.
\begin{proposition}\label{I=J}
Let $(R,\m)$ be a $2$-dimensional Cohen-Macaulay local ring and $M=I\oplus J$, where $I$
is an $\m$-primary ideal $J$ be a minimal
reduction of $I$. If $\red (M)\le 1$, then $I=J$.
\end{proposition}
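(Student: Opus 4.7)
My plan is to compute the fiber multiplicity $f_0(M)$ in two independent ways and combine the two expressions to force $\mu(I)=2$. The starting observation is structural: identifying $M = Ie_1 \oplus Je_2$ inside $F = Re_1 \oplus Re_2$ and writing $T_1,T_2$ for the generators of $\operatorname{Sym}(F)=R[T_1,T_2]$, a direct expansion of products yields
\[ \R_n(M) \;=\; \bigoplus_{k=0}^n I^{n-k}J^k\, T_1^{n-k}T_2^k,\qquad \dim_k \F_n(M) \;=\; \sum_{k=0}^n \mu(I^{n-k}J^k). \]

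First I would compute $f_0(M)$ directly from this formula, \emph{without} using the hypothesis $\red(M)\le 1$. Letting $s=\red_J(I)$ and iterating $I^{s+1}=JI^s$ one gets $I^{n-k}J^k=I^n$ for all $k\le n-s$. Hence for $n\gg 0$,
\[ \dim_k \F_n(M) \;=\; (n-s+1)\,\mu(I^n) \;+\; \sum_{j=0}^{s-1}\mu(I^jJ^{n-j}). \]
The second sum is $O(n)$ (each term is a Hilbert function of a fiber-cone-like module of dimension $\le 2$), while $\mu(I^n)\sim f_0(I)\,n$ since $\F(I)$ has dimension $\ell(I)=2$. So $\dim_k\F_n(M)$ has leading term $f_0(I)\,n^2$ in $n$; matching this against the leading coefficient $f_0(M)/2$ of the degree-$2$ Hilbert polynomial of $\F(M)$ gives $f_0(M)=2f_0(I)$. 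On the other hand, since $\red(M)\le 1$, Theorem 2.4 (together with the Hilbert series derived inside its proof) gives
\[ \H(\F(M),t) \;=\; \frac{1+pt}{(1-t)^{3}},\qquad p \;=\; \mu(M)-3 \;=\; \mu(I)-1, \]
where we used $r=2$ and $\mu(J)=2$. Reading off the fiber multiplicity, $f_0(M)=1+p=\mu(I)$. Equating the two computations yields the numerical identity $\mu(I)=2f_0(I)$.

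The final and most delicate step -- the one I expect to be the main obstacle -- is to deduce $\mu(I)=2$ from this identity. For this I would invoke the standard multiplicity bound $f_0(I)\ge \mu(I)-\ell(I)+1=\mu(I)-1$, valid for $\m$-primary ideals in Cohen--Macaulay local rings; it follows, for instance, from the fact that $\operatorname{Proj}\F(I)\subset\mathbb{P}^{\mu(I)-1}$ is a non-degenerate, equidimensional projective subscheme of dimension $1$, together with the classical hyperplane-section lower bound on its degree. Plugging this into $\mu(I)=2f_0(I)$ forces $\mu(I)\le 2(\mu(I)-1)$, i.e.\ $\mu(I)\le 2$; but $\mu(I)\ge\ell(I)=2$ always, so $\mu(I)=2$. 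Then $\F(I)$ is a $2$-dimensional standard graded $k$-algebra generated in degree one by $\mu(I)=2$ elements, so $\F(I)\cong k[T_1,T_2]$ is a polynomial ring; as in the proof of Theorem 2.4, this forces $I$ to have no proper reduction, and therefore $J=I$.
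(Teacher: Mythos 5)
Your computation of $f_0(M)$ in two ways is correct and is essentially the paper's strategy: the paper also derives $f_0(M)=\mu(M)-2=\mu(I)$ from Theorems \ref{cmfm2} and \ref{cmfm}, and $f_0(M)=2f_0(I)$ from the graded decomposition $\R_n(M)=\bigoplus_k I^{n-k}J^k T_1^{n-k}T_2^k$. (Your version of the second computation, which only uses that $J$ is a reduction of $I$ with some reduction number $s$ and estimates the $s$ exceptional terms as $O(n)$, is actually cleaner than the paper's, which tacitly assumes $J^iI^{n-i}=I^n$ for all $1\le i\le n-1$.) The numerical identity $\mu(I)=2f_0(I)$ and the endgame from $\mu(I)=2$ are also fine.

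The gap is exactly where you predicted it: the inequality $f_0(I)\ge \mu(I)-\ell(I)+1$. The ``classical hyperplane-section lower bound'' $\deg X\ge \operatorname{codim} X+1$ for non-degenerate projective schemes is valid for \emph{irreducible reduced} varieties, but fails for general equidimensional non-degenerate schemes (two skew lines in $\mathbb{P}^3$ are non-degenerate of degree $2<3$), and $\operatorname{Proj}\F(I)$ need not be irreducible or reduced. The algebraic form of the bound, $e(A)\ge \operatorname{embdim}(A)-\dim(A)+1$ for a standard graded algebra $A$ generated in degree one, requires Cohen--Macaulayness: $A=k[x,y]/(x^2,xy)$ has $e=1<2$. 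So as stated your lower bound on $f_0(I)$ is unproved, and it is not a citable general fact for fiber cones. The paper supplies precisely the missing input by first establishing that $\F(I)$ is Cohen--Macaulay --- it observes that $\red(M)\le 1$ forces $\red(I)\le 1$, whence $\F(I)$ is Cohen--Macaulay by Shah's theorem and $f_0(I)=\mu(I)-1$ \emph{exactly}, which then gives $\mu(I)=2(\mu(I)-1)$. To repair your argument along the same lines: if $N=(a_1T_1+b_1T_2,\,a_2T_1+b_2T_2,\,a_3T_1+b_3T_2)$ is a minimal reduction of $M$ with $N\R_1(M)=\R_2(M)$, comparing $T_1^2$-components gives $(a_1,a_2,a_3)I=I^2$ with $(a_1,a_2,a_3)\subseteq I$ a reduction of $I$; Shah's theorem (Theorem \ref{cmfibercone}) then yields the Cohen--Macaulayness of $\F(I)$, hence $f_0(I)\ge\mu(I)-1$ by the Abhyankar bound for Cohen--Macaulay graded algebras, and your conclusion follows. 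Without some such step the final inequality is unsupported.
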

\begin{proof}
Since $\red(M)\le 1$, by Theorem \ref{cmfm2}, $\F(M)$ is Cohen-Macaulay.\\
Now by Theorem \ref{cmfm}, $f_0(M)=\mu (M)-2=\mu (I)+\mu (J)-2=\mu (I)$. The Hilbert function of $\F(M)$ is given by
  \begin{eqnarray*}
    H(\F(M),n)&=&\ell \left(\frac{\R_n(M)}{\m \R_n(M)}\right)\\
    &=&\sum_{i=0}^{n}\ell\left(\frac{J^iI^{n-i}}{\m J^iI^{n-i}}\right)\\
    &=&\ell \left(\frac{I^n}{\m I^n}\right)+\sum_{i=1}^{n-1}\ell \left(\frac{I^n}{\m I^n}\right)+\ell\left(\frac{J^n}{\m J^n}\right)\\
    &=&nH(\F(I),n)+H(\F(J),n).
  \end{eqnarray*}
Therefore $f_0(M)=2f_0(I)$. Since $\red (I)\leq 1,$ $\F(I)$ is
Cohen-Macaulay and hence $f_0(I)=\mu (I)-1$. Hence $\mu (I)=2$ and hence a parameter ideal. Therefore $I=J.$
\end{proof}

\section {Vasconcelos inequality for $d=2$}

In this section, we prove the Vasconcelos inequality for modules
over two dimensional Cohen-Macaulay rings. We adopt the technique used
to prove the inequality for $f_0(I)$ in \cite {vas2}. The idea
involves using the knowledge of the Hilbert polynomial of the Sally
module $S_N(M)$, where $N$ is a minimal reduction of $M$. The notion
of Sally module $S_N(M)$ was introduced in \cite{bala}, extending the
corresponding notion of ideals, \cite{WV1}. We first recall the definition of
$S_N(M)$ from \cite{bala}.
\begin{definition}
Let $(R,\mathfrak m)$ be a Noetherian local ring, $M \subset F$ be
an $R$-module and $N\subseteq M$
be an $R$-submodule. Then the Sally module of $M$ with respect to $N$
is defined as $ S_N(M):=\underset {n\ge 1}\oplus \frac {\mathcal
{R}_{n+1}(M)}{M\mathcal {R}_{n}(N)}.$
\end{definition}

Note that if $N$ is a reduction of $M$, then $S_N(M)$ is a finitely
generated $\R(N)$-module and $S_N(M) = 0$ if and only if $\red_N(M) =
1$.  If $r = 1$, then this definition coincides with the definition of
Sally module of an ideal $I$ with respect to a minimal reduction $J$.
It is to be noted that in this case, $\dim_{\R(J)} S_J(I) = d$ if
$S_J(I) \neq 0$,
\cite{WV}. Note that the proof of this result given by Corso et al.,
\cite[Theorem 2.1]{vas2}, can not be adapted to the
case of modules. Another approach to the dimension is through the
Hilbert function. In the case of rank one, the Hilbert polynomial
of the Sally module can be computed and then using Northcott
inequality along with Huneke-Ooishi theorem one can conclude that the
dimension of the Sally module is $d$. This approach also fails in the
case of modules since an analogue of Northcott inequality (for $d \geq
3$) and Huneke-Ooishi theorem (even for $d \geq 2$) are not known.

Suppose $M \subset F$ is of rank $r$ with $\ell(F/M) < \infty$ and $N$
is a minimal reduction of $M$. We first show that if $S_N(M) \neq 0$, then
 $\dim_{\R(N)} S_N(M) \le d+r-1$.
Set $T=\oplus T_n=\oplus \frac {S_n(F)}{M\R_{n-1}(N)}$ and
$S_N(M)=\oplus S_n =\oplus \frac {R_{n+1}(M)}{M\R_n(M)}$. Let
$BF_M(n)$ and $BF_N(n)$ denote the Buchsbaum-Rim functions of $M$ and
$N$ respectively, i.e., $BF_M(n)=\ell
\left(\frac{S_n(F)}{\R_n(M)}\right)$ and $BF_N(n)=\ell
\left(\frac{S_n(F)}{\R_n(N)}\right)$. Observe that
$$BF_M(n)\le \ell (T_n)\le BF_N(n).$$
Since Buchsbaum-Rim polynomials of $M$ and $N$ are of degree $d+r-1$
with same leading coefficients, it follows that the Hilbert polynomial
of $T$ is of degree $d+r-1$ with the same leading coefficient as that
of Buchsbaum-Rim polynomial of $M$. Note that
$$\ell (S_{n-1})=\ell (T_n) - BF_M(n).$$
Therefore it follows that the Hilbert polynomial of Sally module is of
degree at most $d+r-2$ and hence $\dim S_N(M)\le d+r-1$.  If $d=2$,
then by  \cite[Theorem 3.2]{bala}, it follows that, for $n\gg 0$

\begin{eqnarray}\label{sally-poly}
\ell (S_{n-1}) & =  & [br_1(M)-br_0(M)+\ell (F/M)]{n+r-1\choose
r}-br_2(M){n+r-2\choose r-1} \nonumber \\ & & +\cdots +(-1)^rbr_{r+1}(M).
\end{eqnarray}

\noindent
It is not known whether the equality $br_0(M) - br_1(M) = \ell(F/M)$
implies $\red_N(M) = 1$ and hence we can not possibly conclude from
the above equation that $\dim_{\R(N)} S_N(M) = r$. Therefore, we ask:

\begin{question}\label{sallydim}
Let $(R,\m)$ be a $d$-dimensional Cohen-Macaulay local ring . Let $M
\subset F$ be an $R$-module of rank $r$ with $\ell(F/M) < \infty$ and
$N \subseteq M$ be a minimal reduction of $M$. If $S_N(M)$ is
non-zero, then is $dim_{\R(N)} S_N(M)=d+r-1$?
\end{question}
Keeping in mind the case $r = 1$, we would like to ask another
question, an affirmative answer to which will give an affirmative
answer to Question \ref{sallydim}:
\begin{question}
Let $(R,\m)$ be a $d$-dimensional Cohen-Macaulay local ring. Let $M
\subset F$ be an $R$-module of rank $r$ with $\ell(F/M) < \infty$ and
$N$ be a minimal reduction of $M.$ Is the multiplicity of the Sally
module, $e_0(S_N(M)) = \ell(F/M) +br_1(M) - br_0(M)$?
\end{question}
We now prove the Vasconcelos inequality for modules over $2$-dimensional
Cohen-Macaulay local rings.
\begin{theorem}\label {cpv-general}
Let $(R,\m)$ be a $2$-dimensional Cohen-Macaulay local ring. Let
$M\subset F$ be such that $\ell (F/M)< \infty$ and having rank $r$.
Then $$f_0(M)\le br_1(M)-br_0(M)+\ell(F/M)+\mu
(M)-r.$$ If $\red (M) \leq 1$, then the equality holds.
\end{theorem}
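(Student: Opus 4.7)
The plan is to adapt the Corso--Polini--Vasconcelos argument \cite{vas2} to modules: bound $\mu(\R_{n+1}(M))$, which equals $H(\F(M), n+1)$, through the defining exact sequence of the Sally module and then compare leading coefficients of Hilbert polynomials in $n$. To set up, I would fix a minimal reduction $N = \langle x_1, \ldots, x_{r+1}\rangle$ of $M$, extend these generators to a minimal generating set $\{x_1, \ldots, x_{r+1}, y_1, \ldots, y_p\}$ of $M$ with $p = \mu(M) - r - 1$, and set $K = \langle y_1, \ldots, y_p\rangle$ so that $M = N + K$.

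The first step is to exploit the short exact sequence
$$0 \longrightarrow M\R_n(N) \longrightarrow \R_{n+1}(M) \longrightarrow S_n \longrightarrow 0,$$
where $S_n$ denotes the $n$-th graded piece of $S_N(M)$. Subadditivity of $\mu$ across this sequence gives $\mu(\R_{n+1}(M)) \leq \mu(M\R_n(N)) + \mu(S_n)$, while the decomposition $M\R_n(N) = N\R_n(N) + K\R_n(N) = \R_{n+1}(N) + K\R_n(N)$ yields $\mu(M\R_n(N)) \leq \mu(\R_{n+1}(N)) + p\cdot \mu(\R_n(N))$. The second step is to identify $\F(N)$ with a polynomial ring in $r+1$ variables over the residue field; this reduces to the analytical independence of $x_1,\ldots,x_{r+1}$ guaranteed by \cite[Corollary 4.5]{HH} in dimension two, and it gives $\mu(\R_n(N)) = \binom{n+r}{r}$.

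Combining the previous estimates with $\mu(S_n) \leq \ell(S_n)$ and the explicit polynomial expansion \eqref{sally-poly} for $\ell(S_n)$, we obtain for $n \gg 0$
$$\mu(\R_{n+1}(M)) \leq \binom{n+r+1}{r} + p\binom{n+r}{r} + [br_1(M) - br_0(M) + \ell(F/M)]\binom{n+r}{r} + O(n^{r-1}).$$
Since both sides are polynomials in $n$ of degree at most $r$, equating leading coefficients (the left side contributing $f_0(M)$) gives $f_0(M) \leq 1 + p + br_1(M) - br_0(M) + \ell(F/M)$, which is exactly $\mu(M) - r + br_1(M) - br_0(M) + \ell(F/M)$ after substituting $p = \mu(M) - r - 1$.

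For the equality assertion under $\red(M) \leq 1$, Theorem \ref{cmfm2} makes $\F(M)$ Cohen-Macaulay, and then Theorem \ref{cmfm} together with the computation $\ell(M/(N+\m M)) = \mu(M) - \mu(N) = p$ (already carried out inside the proof of Theorem \ref{cmfm2}) yields $f_0(M) = 1 + p = \mu(M) - r$. Simultaneously, $\red_N(M) \leq 1$ forces $S_N(M) = 0$, so the polynomial \eqref{sally-poly} vanishes identically and $br_1(M) - br_0(M) + \ell(F/M) = 0$; the inequality then collapses to an equality. I expect the main obstacle to be the identification of $\F(N)$ with a polynomial ring, since the required analytical independence of minimal generators of a parameter module is currently available only in dimension two -- this is precisely the bottleneck that confines the argument to $d=2$.
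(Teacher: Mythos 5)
Your proposal is correct and follows essentially the same route as the paper: the paper packages your short exact sequence $0\to M\R_n(N)\to \R_{n+1}(M)\to S_n\to 0$ together with the decomposition $M\R_n(N)=\R_{n+1}(N)+K\R_n(N)$ into the single presentation $\R(N)\oplus\R(N)^k[-1]\to\R(M)\to S_N(M)[-1]\to 0$, tensors with $R/\m$, and compares leading coefficients against the Sally-module polynomial \eqref{sally-poly}, exactly as you do, and it treats the equality case via Theorem \ref{cmfm2} and \cite[Theorem 3.3]{bala} just as in your last paragraph. One minor remark: for the inequality itself only the upper bound $\mu(\R_n(N))\le {n+r\choose r}$ is needed (which holds in any dimension since $N$ has $d+r-1$ generators), so the true $d=2$ bottleneck is the availability of \eqref{sally-poly} rather than the analytic independence you single out, the latter being needed only for the equality assertion.
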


\begin{proof}
Let $N$ be a minimal reduction of $M$. Let us choose  $f_1,\ldots ,
f_k $ from $M$ such that $M=(N,f_1,\ldots , f_k)$, where $k=\mu
(M)-\mu (N)=\mu (M)-r-1$. Let $S_N(M)$ be the Sally module of $M$
with respect to $N$. For $i=1,\ldots, k$, let $g_i$ denote image of
$f_i$ in $\R_1(M)$.
Consider the following $\R(N)$-module homomorphisms
$$i:\R(N)\rightarrow \R(M)\mbox{ and } \phi : \R(N)^{k} \rightarrow \R(M),$$
where $i$ is the natural inclusion map and $\phi$ is defined by $\phi (e_i)=g_i$ for $i=1,\ldots,k$, where $\{e_1,\ldots,e_k\}$ is the standard basis for $\R(N)^k$. Now consider the following graded exact sequence of $\R(N)$-modules
$$\R(N)\oplus \R(N)^k[-1] \stackrel {\psi }\longrightarrow \R(M) \longrightarrow S_N(M)[-1]\longrightarrow 0,$$
where $\psi$ is induced by $i$ and $\phi$. Tensor the above sequence with $-\otimes R/\m$ to get the following graded exact sequence with corresponding induced maps
$$\F(N)\oplus \F(N)^k[-1] \stackrel {\psi }\longrightarrow \F(M) \longrightarrow S_N(M)[-1]\otimes \frac{R}{\m}\longrightarrow 0.$$
Taking lengths of the graded parts we get, for $n\in \NN$,
\begin{eqnarray*}
  \ell ([\F(M)]_n)
  &\le&\ell \left([\F(N)]_n + [\F(N)^k]_{n-1}\right)+\ell \left(\left[\frac{S_N(M)}{\m S_N(M)}\right]_{n-1}\right)\\
  &\le&\ell \left([\F(N)]_n \right) + \ell \left([\F(N)^k]_{n-1}\right)+\ell \left(\left[S_N(M)\right]_{n-1}\right).
\end{eqnarray*}
Note that for $n\gg 0$,

\begin{eqnarray*}
\ell \left([\F(M)]_n \right)&=&\sum _{i=0}^{r}(-1)^if_i(M){n+r-i\choose r-i},\\
\ell \left([\F(N)]_n \right)&=& {n+r\choose r},\\
\ell \left([\F(N)^k]_{n-1}\right)&=& k{n+r-1\choose r},\\
\ell
\left(\left[S_N(M)\right]_{n-1}\right)&=&\left[br_1(M)-br_0(M)+\ell
  (F/M)\right]{n+r-1\choose r}\\
& & + \sum _{i=1}^{r}(-1)^{i}br_{i+1}(M){n+r-1-i\choose r-i},
\end{eqnarray*}
where the last equality follows from (\ref{sally-poly}).
It follows, by comparing the leading coefficients, that
\begin{eqnarray*}
  f_0(M)
  &\le&1+k+br_1(M)-br_0(M)+\ell (F/M)\\
  &=&br_1(M)-br_0(M)+\ell (F/M)+\mu (M)-r.
\end{eqnarray*}
Now assume that $\red (M)=1$. It follows from Theorem $\ref {cmfm2}$
that $\F(M)$ is Cohen-Macaulay and hence $f_0(M)=1+\mu (M)-\mu (N)=\mu
(M)- r.$ Since $\red(M) = 1$, by \cite[Theorem 3.3]{bala}, we get
$br_0(M)-br_1(M)=\ell (F/M)$. Therefore the result follows.
\end{proof}
As a consequence, we obtain a bound on the reduction number, similar
to that of \cite[Corollary 1.5]{rossi}. It may be noted that in
\cite{rossi}, the bound is derived
without the Cohen-Macaulay assumption on the fiber cone.
\begin{corollary}\label{rossi_bound}
Let $(R,\m)$ be a $2$-dimensional Cohen-Macaulay  local ring and
$M\subset F$ be such that $\ell(F/M) < \infty$ and having rank
$r$. Assume that the fiber cone $\F(M)$ is Cohen-Macaulay. Then
$$\red (M)\le br_1(M)-br_0(M)+\ell (F/M)+1.$$
\end{corollary}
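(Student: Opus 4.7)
The plan is to obtain the bound by simply chaining the two inequalities already available in the paper: the reduction number estimate in Remark \ref{red_bound} and the Vasconcelos inequality from Theorem \ref{cpv-general}.

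First I would invoke Remark \ref{red_bound}, which says that whenever $\F(M)$ is Cohen-Macaulay one has
\[ \red(M) \le f_0(M) - \mu(M) + d + r - 1. \]
Specializing to $d = 2$, this becomes
\[ \red(M) \le f_0(M) - \mu(M) + r + 1. \]

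Next I would apply Theorem \ref{cpv-general} (which does not require the Cohen-Macaulay hypothesis on $\F(M)$) to bound $f_0(M)$ from above:
\[ f_0(M) \le br_1(M) - br_0(M) + \ell(F/M) + \mu(M) - r. \]
Substituting this into the previous inequality, the $\mu(M)$ and $r$ terms cancel and we obtain exactly
\[ \red(M) \le br_1(M) - br_0(M) + \ell(F/M) + 1, \]
as desired.

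There is no real obstacle here: the work has been done in establishing Remark \ref{red_bound} (which is a direct consequence of Theorem \ref{cmfm}) and in proving Theorem \ref{cpv-general}. The only thing to verify carefully is that the hypotheses of both results are met, namely that $(R,\m)$ is $2$-dimensional Cohen-Macaulay with infinite residue field, $M \subset F$ has finite colength and rank $r$, and $\F(M)$ is Cohen-Macaulay (needed for Remark \ref{red_bound}). All of these are included in the corollary's hypotheses, so the combination is immediate.
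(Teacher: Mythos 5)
Your proposal is correct and is exactly the paper's own argument: the paper proves the corollary by chaining Remark \ref{red_bound} (with $d=2$) with the bound on $f_0(M)$ from Theorem \ref{cpv-general}, just as you do. The arithmetic checks out and the hypotheses are verified as you describe.
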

\begin{proof}
    By Remark \ref{red_bound} and Theorem \ref {cpv-general},
  $$\red (M)\le f_0(M)-\mu (M)+r+1 \le br_1(M)-br_0(M)+\ell (F/M) + 1.$$
\end{proof}

In \cite{bala}, we obtained a Northcott type inequality for the
Buchsbaum-Rim coefficients and also proved that $\red(M)\le 1$ ensures
the equality. Now we prove a partial converse, i.e., the equality in
the Northcott inequality yields the reduction number to be at most one
under the assumption that the fiber cone is Cohen-Macaulay.
\begin{corollary}
Let $(R,\m)$ be a $2$-dimensional Cohen-Macaulay  local ring and
$M\subset F$ be such that $\ell(F/M) < \infty$ and having rank
$r$. Then the following are equivalent:
\begin{enumerate}
  \item $\F(M)$ is Cohen-Macaulay and $br_0(M)-br_1(M)=\ell (F/M)$,
  \item $\red(M) \leq 1$.
\end{enumerate}
\end{corollary}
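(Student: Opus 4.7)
The plan is to derive both implications as immediate consequences of results already established in the paper, so essentially no new machinery is required.

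For the direction (2) $\Rightarrow$ (1), I would invoke Theorem \ref{cmfm2} to get that $\red(M) \leq 1$ forces $\F(M)$ to be Cohen-Macaulay. The Northcott-type equality $br_0(M) - br_1(M) = \ell(F/M)$ then follows from the very same \cite[Theorem 3.3]{bala} that was used at the end of the proof of Theorem \ref{cpv-general}. Both pieces are already in place, so this direction is essentially a citation.

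For the direction (1) $\Rightarrow$ (2), the strategy is to feed the equality hypothesis $br_0(M) - br_1(M) = \ell(F/M)$, equivalently $br_1(M) - br_0(M) + \ell(F/M) = 0$, directly into the bound obtained in Corollary \ref{rossi_bound}. Since that corollary requires $\F(M)$ to be Cohen-Macaulay (which is the other half of hypothesis (1)), we conclude
\[
\red(M) \;\leq\; br_1(M) - br_0(M) + \ell(F/M) + 1 \;=\; 1,
\]
which is exactly (2).

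The only thing to be careful about is making sure the implication (1) $\Rightarrow$ (2) is not circular: Corollary \ref{rossi_bound} is proved by combining Remark \ref{red_bound} (a purely Cohen-Macaulay fiber cone consequence) with Theorem \ref{cpv-general} (the Vasconcelos inequality for $d=2$), and neither of these assumes the Northcott equality as input, so the reasoning is genuinely forward. There is no real obstacle here; the proof is essentially two lines once the earlier results are in hand.
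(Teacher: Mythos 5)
Your proposal is correct and matches the paper's own argument: the paper proves $(1)\Rightarrow(2)$ by exactly the same appeal to Corollary \ref{rossi_bound}, and $(2)\Rightarrow(1)$ by citing \cite[Theorem 3.3]{bala} (with Theorem \ref{cmfm2} supplying the Cohen-Macaulayness, as you note). Your remark on non-circularity is a sensible check and consistent with how the earlier results are derived.
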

\begin{proof}
$(1) \Rightarrow (2)$: Follows from Corollary \ref{rossi_bound}.\\
$(2) \Rightarrow (1)$: Follows from \cite[Theorem 3.3]{bala}.
\end{proof}

\section{Direct sum of ideals}

In this section, we study the Vasconcelos inequality for modules which
are direct sums of an $\m$-primary ideal. We begin by producing an
example to show that the inequality does not hold for modules over
1-dimensional rings.  Then we proceed to prove the result for
$d\ge 2$.

\begin{example}\cite[Example 6.2]{jayan-tony}
Let $k$ be a field and $R=k[\![t^7,t^{15},t^{17},t^{33}]\!]$, $I=(t^7,t^{17},t^{33})$ and
$J=(t^7)$. Then $R$ is a one dimensional Noetherian local domain and $I$
is an $\m$-primary ideal with minimal reduction $J$. Since $\ell
(I^2/JI)=1$ and $I^3=JI^2$, by \cite{sally}, $H_I(n)=P_I(n)$ for all
$n>1$, where $H_I(n)$ and $P_I(n)$ denote the Hilbert function and
Hilbert polynomial of $I$ respectively. It can be easily computed that
$P_I(n)=7n-5$. The fiber cone $\F(I)$ is Cohen-Macaulay \cite
[Theorem 3.4]{jayan-tony} and its multiplicity is $f_0(I)=4$. Let
$M=I\oplus I$. The Buchsbaum-Rim polynomial corresponding to
$M\subseteq F=R^2$ is given by
$$BP_M(n)=(n+1)(7n-5)=14{n+1\choose 2}-5{n\choose 1}-5.$$
Therefore
$$br_0(M)=14, br_1(M)=5, \ell (F/M)=6, \mu (M)=6, f_0(M)=f_0(I)=4.$$
Hence we have $f_0(M)=4>br_1(M)-br_0(M)+\ell(F/M)+\mu(M)-(d+r-2)=2$.
\end{example}
\begin{remark}\label{sum-formula}
Let $M=I\oplus \cdots \oplus I \subseteq F=R^r$. Let $S(F)\cong R[t_1,
\ldots, t_r]$ and $\R(M)\cong R[I t_1, \ldots, I t_r]$, where $t_1,
\ldots, t_r$ are indeterminates over $R$. The homogeneous
$R$-submodule $\R_n(M)$ of $\R(M)$ is given by $$\R_n(M)\cong
\sum_{i_1+\cdots +i_r=n} I^n t_{1}^{i_1}\cdots t_{r}^{i_r}.$$
Therefore, for $n\ge 0$, we have
  \begin{eqnarray*}
  \mu (\R_n(M)) &=& \ell \left(\frac{\R_n(M)}{\m \R_n(M)}\right) =
  {n+r-1\choose r-1}\mu (I^n)~ ~  ~ ~ ~\mbox{ and }\\
  BF_M(n) &=& \ell \left(\frac{\S_n(F)}{\R_n(M)}\right) = {n+r-1\choose r-1} \ell (R/I^n).
  \end{eqnarray*}
Hence for $n\gg 0$, $$\mu (\R_n(M))={n+r-1\choose r-1}
\sum_{i=0}^{d}(-1)^{i}f_i(I){n+d-1-i\choose d-1-i}.$$Therefore
$$f_0(M)={d+r-2\choose r-1}f_0(I).$$
Similarly we have
\begin{eqnarray*}
br_0(M)&=&{d+r-1\choose r-1}e_0(I),\\
br_1(M)&=&(d-1){d+r-2\choose r-2}e_0(I)+{d+r-2\choose r-1}e_1(I),\\
\ell (F/M)&=&r\ell (R/I),\mbox { and } \mu (M)=r\mu (I).
\end{eqnarray*}
\end{remark}

We conclude the article by presenting a class of modules for
which the Vasconcelos inequality holds true, namely, modules which are
direct sum of two $\m$-primary ideals $I$ and $J$, where one of them, say
$J$, is a reduction of $I$. In this case, it can be seen that the
fiber multiplicity $f_0(M)$ and the Buchsbaum-Rim coefficients
$br_0(M)$ and $br_1(M)$ depend only on $I$ and $r$, not on the
number of copies of $J$ involved in the direct sum. 
Recall that if $I$ is an $\m$-primary
ideal in a Cohen-Macaulay local ring $R$, then
$e_0(I) = \mu(I) + \ell(R/I) - d + \ell(\m I/\m J)$, \cite{Goto}. 
\begin{theorem}
Let $(R,\m)$ be a $d$-dimensional Cohen-Macaulay local ring with $d\ge
2$. Let $I$ be an $\m$-primary ideal and $J$ be a reduction. Let
$\mathbf{I}=I\oplus \cdots \oplus I$(u-times), $\mathbf{J}=J\oplus
\cdots \oplus J$(v-times) and $M=\mathbf{I}\oplus \mathbf {J}\subset
F = R^r,$ where $r=u+v$. Then $$f_0(M)\le br_1(M)-br_0(M)+\ell (F/M) +\mu
(M)-(d+r-2).$$
\end{theorem}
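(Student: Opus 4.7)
The plan is to reduce the inequality for $M = \mathbf{I} \oplus \mathbf{J}$ to the Corso--Polini--Vasconcelos inequality (Theorem~\ref{cpv-ideal}) applied to the single ideal $I$, using the observation in the paragraph preceding the theorem that $f_0(M)$, $br_0(M)$, and $br_1(M)$ depend only on $I$ and $r$ rather than on the split $r = u + v$.

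I would first make this observation precise. Identify $\R(M)$ with a graded subalgebra of $R[t_1, \ldots, t_r]$, with $t_1, \ldots, t_u$ tracking the $I$-copies and $t_{u+1}, \ldots, t_r$ the $J$-copies, so that
\[
\R_n(M) \;=\; \bigoplus_{|\alpha| = n} I^{A_\alpha} J^{n-A_\alpha}\, t^{\alpha}, \qquad A_\alpha = i_1 + \cdots + i_u.
\]
Since $J$ is a reduction of $I$ with $n_0 := \red_J(I)$, the equality $I^A J^{n-A} = I^n$ holds whenever $A \geq n_0$. Hence the sub-summand of $\R_n(M)$ with $A_\alpha \geq n_0$ reproduces $\R_n(M')$ for $M' := I^{\oplus r}$, while the ``correction'' over $A_\alpha < n_0$ contributes to $BF_M(n)$ and to $\mu(\R_n(M))$ a polynomial in $n$ of degree at most $(v-1) + (d-1) = d + v - 2 \leq d + r - 3$ (assuming $u \geq 1$). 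This lies strictly below the degrees $d + r - 1$ and $d + r - 2$ governing the top coefficients of $BF_M$ and of the fiber Hilbert function. Consequently, Remark~\ref{sum-formula} applied to $M'$ gives $f_0(M) = \binom{d+r-2}{r-1}f_0(I)$, $br_0(M) = \binom{d+r-1}{r-1}e_0(I)$, and $br_1(M) = (d-1)\binom{d+r-2}{r-2}e_0(I) + \binom{d+r-2}{r-1}e_1(I)$.

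Next, combine these identities with $\mu(M) = u\mu(I) + v\mu(J)$ and $\ell(F/M) = u\ell(R/I) + v\ell(R/J)$, and apply CPV to $I$: multiplying $f_0(I) \leq e_1(I) - e_0(I) + \ell(R/I) + \mu(I) - d + 1$ by $\binom{d+r-2}{r-1}$ bounds $f_0(M)$ from above. Substituting Goto's formula $\ell(R/I) + \mu(I) = e_0(I) + d - \ell(\m I/\m J)$ (recalled just before the theorem) together with $\ell(R/J) + \mu(J) = e_0(I) + d$ (since $J$ is a parameter ideal in a Cohen--Macaulay ring with $e_0(J) = e_0(I)$), the whole claim reduces to the numerical inequality
\[
A \cdot e_0(I) + D \cdot \ell(\m I / \m J) + C \geq 0,
\]
where $A = (d-2)\binom{d+r-2}{r-2} + r - \binom{d+r-2}{r-1}$, $D = \binom{d+r-2}{r-1} - u$, and $C = (d-1)(r-1) + 1 - \binom{d+r-2}{r-1}$. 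The degenerate case $u = 0$ (where $M = \mathbf{J}$) reduces to the same inequality with $I$ replaced by $J$.

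The main obstacle is this numerical estimate. Standard binomial manipulations give $A \geq 0$ and $D \geq 0$ (using $u \leq r \leq \binom{d+r-2}{r-1}$ for $d \geq 2$); since $e_0(I) \geq 1$ and $\ell(\m I/\m J) \geq 0$, it then suffices to show $A + C \geq 0$. Direct computation yields $A + C = 0$ when $d = 2$ (any $r$) or when $r \leq 3$ (any $d$). For $d \geq 3$ and $r \geq 3$, the increment
\[
(A+C)_{r+1} - (A+C)_r \;=\; \frac{r(d-2) - 2(d-1)}{r}\binom{d+r-2}{r-1} + d
\]
is non-negative: for $r \geq 4$ the numerator $r(d-2) - 2(d-1) = (r-2)(d-2) - 2$ is already non-negative, and the remaining small cases $r = 3$, $d \in \{3, 4\}$ are checked directly. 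Induction on $r$ then delivers $A + C \geq 0$, completing the proof.
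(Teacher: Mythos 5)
Correct, and essentially the paper's own route: reduce to $M'=I^{\oplus r}$ by showing the $A_\alpha<n_0$ corrections contribute only in degree $\le d+v-2$, invoke the formulas of Remark~\ref{sum-formula}, bound $f_0(I)$ by Theorem~\ref{cpv-ideal}, and substitute Goto's formula $e_0=\ell(R/I)+\mu(I)-d+\ell(\m I/\m J)$ to land on a purely numerical inequality. The only divergence is the endgame: your organization as $Ae_0+D\,\ell(\m I/\m J)+C\ge 0$ with $A,D\ge 0$, $A+C=0$ for $d=2$ or $r\le 3$, and induction on $r$ via the increment $\frac{r(d-2)-2(d-1)}{r}\binom{d+r-2}{r-1}+d$ (all of which I verified) is tighter and more complete than the paper's case-splitting on $d$, $r$ and $\ell(\m I/\m J)$, much of which is left to the reader.
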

\begin{proof}
Since the assertion is proved for $d = 2$ in the previous section, we
may assume that $d \geq 3$.
Let us assume that result holds for $M'=I\oplus \cdots \oplus I (r-times)$.
  Since $J$ is a reduction of $I$, $JI^s=I^{s+1}$ for some $s\in \mathbb{N}$ and $e_0(I)=e_0(J)$. Let
$$\Delta = \sum _{i=s}^n {i+u-1\choose u-1}{n-i+v-1\choose v-1} ~ and ~\delta = \sum _{i=0}^{s-1} {i+u-1\choose u-1}{n-i+v-1\choose v-1}.$$
Note that $\Delta = {n+r-1\choose r-1}-\delta.$ The Buchsbaum-Rim function is given by

\begin{eqnarray*}
BF(n)&=&\Delta  \ell (R/I^n) + \sum _{i=0}^{s-1} \left[{i+u-1\choose u-1}{n-i+v-1\choose v-1}\ell (R/I^iJ^{n-i})\right]\\
&=& \left[{n+r-1\choose r-1}-\delta \right]\ell (R/I^n)\\
& & + \sum _{i=0}^{s-1} \left[{i+u-1\choose u-1}{n-i+v-1\choose v-1}\ell (R/I^iJ^{n-i})\right]\\
&=& {n+r-1\choose r-1}\ell (R/I^n) + \sum _{i=0}^{s-1} \left[{i+u-1\choose u-1}{n-i+v-1\choose v-1}\ell (I^n /I^iJ^{n-i})\right].
\end{eqnarray*}
Note for each $i=0,\ldots,s-1,$
$$\ell \left(\frac{R}{I^n}\right)\le \ell \left(\frac {R}{I^iJ^{n-i}}\right) \le \ell \left(\frac {R}{J^n}\right).$$
Hence for each fixed $i$ and $n\gg 0$, the function $\ell (R/I^iJ^{n-i})$ is given by polynomial of degree $d$ with leading coefficient $e_0(I)$.
This implies $\ell (I^n /I^iJ^{n-i})=\ell (R/I^iJ^{n-i}) - \ell (R/I^n)$ is given by polynomial of degree at most $d-1$ for large $n$.\\
By considering $n\gg 0$, the Buchsbaum-Rim polynomial of $M$ is given by
$$BP(n)={n+r-1\choose r-1}P_I(n)+ O(n^{d+r-3}).$$
Note that the first term in the above expression is the Buchsbaum-Rim polynomial of $M'=I\oplus \cdots\oplus I$($r$-times). Therefore
$br_0(M)=br_0(M')$ and $br_1(M)=br_1(M')$.
  Similarly we show that $f_0(M)=f_0(M')$. Let $s=\red _J(I)$. Then $JI^i=I^{i+1}$ for $i\ge s$.
  \begin{eqnarray*}
    \mu (\R_n(M))&=&\left[{n+r-1\choose r-1}-\sum _{i=0}^{s-1} \left[{i+u-1\choose u-1}{n-i+v-1\choose v-1}\right]\right]\mu (I^n)\\
     &~&~~~~~~~~~~ + \sum _{i=0}^{s-1} \left[{i+u-1\choose u-1}{n-i+v-1\choose v-1}\ell \left(\frac{I^iJ^{n-i}}{\m I^iJ^{n-i}}\right)\right]\\
    &=& {n+r-1\choose r-1}\mu (I^n)\\
    &~&~~~~~~~~~~+\sum _{i=0}^{s-1} {i+u-1\choose u-1}{n-i+v-1\choose v-1}\left[\ell \left(\frac{I^iJ^{n-i}}{\m I^iJ^{n-i}}\right)-\ell \left(\frac{I^n}{\m I^n}\right)\right]\\
    &=&{d+r-2\choose r-1}f_0(I){n+d+r-2\choose d+r-2} + O(n^{d+r-3}).
  \end{eqnarray*}
  Hence $f_0(M)={d+r-2\choose r-1}f_0(I)=f_0(M')$.  Also note that
  
\begin{eqnarray*}
\ell (F/M)&=&u\ell (R/I)+v\ell (R/J)=\ell (F/M')-v\ell (R/I)+v\ell (R/J)\\
\mbox{and }\mu (M)&=&u\mu (I)+v\mu (J)=\mu (M')-v\mu (I)+v\mu (J).
\end{eqnarray*}
Therefore
  \begin{eqnarray*}
     br_1(M)&-&br_0(M)+\ell (F/M)+\mu (M)-(d+r-2)-f_0(M)\\
    &=&br_1(M')-br_0(M')+\ell (F/M')+\mu (M')-(d+r-2)-f_0(M')\\
    &~&~~~~~~~~+v[\ell (R/J)+\mu (J)-(\ell (R/I)+\mu (I))]\\
    &\ge &0.
  \end{eqnarray*}
  It remains to prove that Vasconcelos inequality holds for $M'=I\oplus \cdots \oplus I (r-times)$.
  Set $\Lambda = br_1(M') - br_0(M') +\ell
  (F/M')+\mu(M')-(d+r-2)-f_0(M')$ and $e_i = e_i(I)$ for $i =
  0,1,\ldots, d$. Then from Remark \ref{sum-formula}, it follows that

  \begin{eqnarray}\label{Lambda}
  \Lambda &=&(d-1){d+r-2\choose r-2}e_0+{d+r-2\choose
	r-1}e_1-{d+r-1\choose r-1}e_0+r[\ell (R/I)+\mu(I)] \nonumber\\
    &~&~~~~-(d+r-2)-{d+r-2\choose r-1}f_0(I)\\
    &\ge& (d-1){d+r-2\choose r-2}e_0+{d+r-2\choose
	r-1}e_1-{d+r-1\choose r-1}e_0+r[\ell (R/I)+\mu(I)]\nonumber\\
    &~&~~~~-(d+r-2)-{d+r-2\choose r-1}
	[e_1-e_0+\ell(R/I)+\mu(I)-(d-1)] \nonumber \\
    &=& (d-1){d+r-2\choose r-2}e_0+{d+r-2\choose
	r-1}e_1-\left[{d+r-2\choose r-1}+{d+r-2\choose r-2}\right]e_0
	\nonumber \\
    &~&+r[\ell (R/I)+\mu(I)]-(d+r-2)
    -{d+r-2\choose r-1} [e_1-e_0+\ell(R/I)+\mu(I)-(d-1)] \nonumber\\
	&=&\left[(r-1)\frac{(d-2)}{d}e_0-\ell(R/I)-\mu(I)+d-1\right]{d+r-2\choose
	r-1} \nonumber\\
	& & +r[\ell(R/I)+\mu(I)]-(d+r-2). \nonumber 
  \end{eqnarray}
If $r = 2$, then the last expression reduces to
\[
\Lambda \ge (d-2)[e_0-\ell (R/I)-\mu (I)+d] \geq 0.
  \]
If $r = 3$, then we have
\begin{eqnarray*}
  \Lambda & \geq & \left[\frac{2(d-2)}{d}e_0 - \ell(R/I) - \mu(I) + d
	-1\right]{d+1 \choose 2} + 3[\ell(R/I) + \mu(I)] - (d+1) \\
	& = & \left[\frac{d-4}{d}e_0 + \ell\left(\frac{\m I}{\m J}\right)
	  - 1\right]{d+1 \choose 2} + 3 [e_0 + d - \ell(\m I/\m J)] -
	  (d+1),
\end{eqnarray*}
where the last equality holds since $e_0
 = \ell(R/I) + \mu(I) - d+ \ell(\m I/\m J)$. Now splitting the proof
into the cases
$d=3$ and $d \geq 4$ together with $\ell(\m I/\m J) = 0$ or $\ell(\m
I/\m J) > 0$, one can easily obtain the inequality $\Lambda \geq 0$.

\vskip 2mm
\noindent
If $r \geq 4$, then $\frac{(r-1)(d-2)}{d} \geq 1$ and the equality
holds if and only if $r = 4$ and $d = 3$.  It is
straightforward to verify the inequality $\Lambda \geq 0$ if $d = 3$
and $r = 4$. If $d > 3$ or $r > 4$, then one obtains the required
inequality by splitting the proof into the cases $\ell(\m I/\m J) = 0$
and $\ell(\m I/\m J) \geq 1$.
\end{proof}
\begin{remark}
Suppose $I$ is a parameter ideal. Then $e_0(I) = \ell(R/I), e_1(I) =
0, \mu(I) = d$ and $f_0(I) = 1$. Therefore, it can be seen from the
expression (\ref{Lambda}) that 
\begin{enumerate}
  \item if $r = 2$, then Vasconcelos inequality becomes an equality,
  \item if $r = 3$, the equality holds if and only if $e_0(I) = 1$
	which happens if and only if $R$ is a regular local ring and $I$
	is the unique maximal ideal; 
  \item if $r \geq 4$, the equality never holds.
\end{enumerate}

\end{remark}

\end{document}